\title{Quotients of definite periodic knots are definite} 
\author{Keegan Boyle}  
\email{kboyle@uoregon.edu}  
\subjclass[2010]{}
\newcommand{\Z}{{\mathbb{Z}}}
\newtheorem{lemma}{Lemma}
\newtheorem{proposition}{Proposition}
\newtheorem{theorem}{Theorem}
\theoremstyle{definition}
\newtheorem{definition}{Definition}[section]
\newtheorem*{thm:main}{Theorem \ref{thm:main}}
\begin{document}
\begin{abstract}
A knot $K$ is definite if $|\sigma(K)| = 2g(K)$. We prove that the quotient of a definite periodic knot is definite by considering equivariant minimal genus Seifert surfaces.
\end{abstract}
\maketitle
\section{introduction}
Let $K$ be a knot in $S^3$ with signature $\sigma(K)$ and genus $g(K)$. Then $K$ is \emph{definite} if $|\sigma(K)| = 2g(K)$. This is a relatively small class of knots, but this condition has a nice geometric interpretation. Specifically, a knot is definite if and only if it has a Seifert surface with definite linking form. 

A knot $K \subset S^3$ is \emph{periodic} if it is fixed by a finite cyclic group acting on $S^3$ with fixed set an unknot disjoint from $K$. In this case we refer to the image of $K$ in $S^3/(\Z/p)$ as the \emph{quotient knot}. 

The goal of this paper is to investigate periodic definite knots, and in particular apply a result of Edmonds \cite[Theorem 4]{E} to prove the following theorem.
\begin{theorem}
\label{thm:main}
The quotient of a periodic definite knot is definite.
\end{theorem}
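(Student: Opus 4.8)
The plan is to produce a $G$-equivariant minimal genus Seifert surface, pass to its quotient, and show that definiteness of the symmetrized Seifert form is inherited by the invariant part. Write $G = \Z/p$ for the group acting on $S^3$ with axis the unknot $A$, and let $\bar{A}, \bar{K}$ be the images in $S^3/G \cong S^3$. After possibly replacing $K$ by its mirror (whose quotient is the mirror of $\bar K$), I may assume $\sigma(K) \le 0$. On any minimal genus Seifert surface the symmetrized Seifert form $S = V + V^{T}$ has rank $2g(K)$, and since $|\sigma(K)| = 2g(K)$ equates $|\mathrm{sign}(S)|$ with $\mathrm{rank}(S)$, the form $S$ must be (negative) definite. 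This is the geometric reformulation of definiteness recorded in the introduction.

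First I would invoke Edmonds' Theorem 4 to obtain a $G$-invariant Seifert surface $F$ for $K$ realizing $g(F) = g(K)$. Since the action is orientation preserving and fixes $F$ together with its positive normal direction, $G$ preserves the Seifert form $V_F$, hence acts by isometries of $S_F = V_F + V_F^{T}$, which is definite by the previous paragraph. The quotient $\bar{F} = F/G$ is then a connected oriented surface in $S^3/G \cong S^3$ with $\partial \bar F = \bar K$, i.e. a Seifert surface for $\bar K$, and $\pi \colon F \to \bar F$ is a cyclic branched cover branched over the points $F \cap A$.

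The heart of the argument is to compare the two forms. I would use the transfer isomorphism $\tau \colon H_1(\bar F; \Q) \to H_1(F;\Q)^{G}$, where $\tau(\bar a)$ is represented by the full preimage $\pi^{-1}(\bar a)$; a Riemann--Hurwitz/Lefschetz count shows the invariant subspace has dimension $2g(\bar F) = \dim H_1(\bar F;\Q)$, so $\tau$ is indeed onto the invariant part. The key computation is multiplicativity of linking numbers under the branched cover: for $1$-cycles $x,y$ on $\bar F$ disjoint from $\bar A$, choosing a $2$-chain $\Sigma_y$ with $\partial \Sigma_y = y$ and pulling it back gives $\mathrm{lk}(\pi^{-1}x, \pi^{-1}y) = \pi^{-1}x \cdot \pi^{-1}\Sigma_y = p\,(x \cdot \Sigma_y) = p\,\mathrm{lk}(x,y)$, since the intersection points lie away from the branch locus and each has exactly $p$ orientation-preserving preimages. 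Because the positive and negative pushoffs commute with $\pi^{-1}$ away from $A$, applying this to $\bar a^{\pm}$ yields the identity $\tau^{*}S_F = p\, S_{\bar F}$.

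Finally, since $S_F$ is definite, its restriction to the real subspace $H_1(F;\R)^{G}$ is definite, so $p\, S_{\bar F} = \tau^{*}S_F$ is definite and hence $S_{\bar F}$ is definite. Therefore $|\sigma(\bar K)| = |\mathrm{sign}(S_{\bar F})| = 2g(\bar F)$, and combining with the universal inequalities $g(\bar F) \ge g(\bar K)$ and $2g(\bar K) \ge |\sigma(\bar K)|$ gives $2g(\bar F) = |\sigma(\bar K)| \le 2g(\bar K) \le 2g(\bar F)$, forcing equality throughout. Thus $|\sigma(\bar K)| = 2g(\bar K)$, so $\bar K$ is definite. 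I expect the main obstacle to be the branched-cover identity $\tau^{*}S_F = p\, S_{\bar F}$ --- in particular verifying that the pulled-back bounding chain has boundary exactly $\pi^{-1}(y)$ and contributes no correction term from the branch locus --- together with confirming that Edmonds' surface genuinely realizes $g(F) = g(K)$ equivariantly.
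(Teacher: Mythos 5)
Your proposal is correct and takes essentially the same route as the paper: Edmonds' equivariant minimal genus Seifert surface, the observation that a definite knot's minimal genus surfaces carry definite forms, and the key computation that lifting cycles from the quotient surface multiplies linking numbers by $p$ --- your transfer map $\tau$ is exactly the paper's passage from a curve $C$ in the quotient to its preimage $\widetilde{C}$ in Lemma \ref{lem:quot}, and injectivity of $\tau$ is all that is needed, so the dimension count is optional. The only cosmetic difference is that you re-derive inline (via the chain of inequalities $2g(\bar F) = |\sigma(\bar K)| \le 2g(\bar K) \le 2g(\bar F)$) the fact that possessing a definite Seifert surface forces $|\sigma| = 2g$, which the paper isolates as Lemma \ref{lem:def}.
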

\section{background}
\begin{definition}
A quadratic form $\langle -,-\rangle$ is \emph{positive (resp. negative) definite} if $\langle x,x \rangle \geq 0$ (resp. $\leq 0$) for all $x \neq 0$.
\end{definition}
We will also use the equivalent charaterization that a matrix is positive (resp. negative) definite if and only if all of its eigenvalues are positive (resp. negative).
\begin{definition}
A Seifert surface $S$ for $K$ is \emph{positive (resp. negative) definite} if the (symmetrized) linking form lk$(-,-)$ on $H_1(S)$ as defined in \cite[Section 2]{GL} is positive (resp. negative) definite. That is, the symmetrized Seifert matrix for $S$ is definite.
\end{definition}

\begin{definition}
A knot is \emph{definite} if it has a definite Seifert surface.
\end{definition}

\begin{lemma}
\label{lem:def}
Let $K \subset S^3$ be a knot. Then the following are equivalent.
\begin{enumerate}
\item $K$ is definite.
\item Every minimal genus Seifert surface for $K$ is definite.
\item $|\sigma(K)| = 2g(K)$, where $g(K)$ is the genus of $K$.
\end{enumerate}
\end{lemma}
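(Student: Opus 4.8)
The plan is to prove the cyclic chain of implications $(1) \Rightarrow (3) \Rightarrow (2) \Rightarrow (1)$, with essentially all of the content reduced to one elementary fact from linear algebra together with the standard genus bound on the signature. The linear algebra engine I would isolate first is this: a symmetric $n \times n$ real matrix $A$ satisfies $|\operatorname{sign}(A)| \le n$, with equality if and only if $A$ is positive or negative definite. This follows by diagonalizing $A$ over $\R$ and counting eigenvalues: writing $\operatorname{sign}(A) = p - q$, where $p$, $q$, $z$ are the numbers of positive, negative, and zero eigenvalues with $p + q + z = n$, the hypothesis $p - q = n$ yields $2q + z = 0$ upon subtracting from $p+q+z = n$, and since $q, z \ge 0$ this forces $q = z = 0$. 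Hence all eigenvalues are positive and $A$ is positive definite (the case $\operatorname{sign}(A) = -n$ is symmetric).

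Next I would set up the geometric side. For any Seifert surface $S$ of genus $h$, the symmetrized Seifert matrix $V_S + V_S^{T}$ is a symmetric $2h \times 2h$ matrix whose signature is, by definition, the knot signature $\sigma(K)$, independent of $S$. Applying the bound above to a minimal genus Seifert surface gives the standard inequality $|\sigma(K)| \le 2g(K)$, which I will use repeatedly.

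With these in hand the implications are short. For $(1) \Rightarrow (3)$: if $S$ is a definite Seifert surface of genus $h$, then $V_S + V_S^{T}$ is a definite $2h \times 2h$ matrix, so $|\sigma(K)| = 2h \ge 2g(K)$; combining this with $|\sigma(K)| \le 2g(K)$ forces $|\sigma(K)| = 2g(K)$ (and incidentally $h = g(K)$, so $S$ was already minimal genus). For $(3) \Rightarrow (2)$: if $S$ is any minimal genus Seifert surface, then $V_S + V_S^{T}$ has size $2g(K)$ and signature $\pm 2g(K)$, hence is definite by the equality case of the linear algebra fact, so $S$ is definite. Finally $(2) \Rightarrow (1)$ is immediate, since minimal genus Seifert surfaces exist.

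The only genuine subtlety is the equality (rigidity) case of the linear algebra lemma, together with bookkeeping of which surfaces realize the bound $|\sigma(K)| = 2g(K)$; the sole external input is the well-definedness of $\sigma(K)$ as the signature of $V_S + V_S^{T}$ independent of the chosen Seifert surface, which I would simply cite as classical.
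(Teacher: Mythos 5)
Your proposal is correct and follows essentially the same route as the paper: the same cyclic chain $(1) \Rightarrow (3) \Rightarrow (2) \Rightarrow (1)$, driven by the same two facts, namely that the signature of the symmetrized Seifert matrix is $\sigma(K)$ independent of the surface, and that a symmetric matrix satisfies $|\sigma(M)| \leq \dim(M)$ with equality exactly in the definite case. You merely spell out the eigenvalue-counting rigidity argument and the inequality $2h \geq 2g(K)$ more explicitly than the paper does.
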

\begin{proof}
(2) implies (1) is obvious, and we will show that (1) implies (3) and (3) implies (2). 

To see that (1) implies (3), suppose $K$ is definite with definite Seifert surface $S$ and corresponding symmetrized Seifert matrix $M \in M_n(\Z)$. Since $M$ is definite, $\sigma(M) = \pm n = \sigma(K)$. In particular, $M$ is a minimal dimensional symmetrized Seifert matrix and so $S$ is a minimal genus Seifert surface. Hence $|\sigma(K)| = 2g(K)$.

On the other hand, suppose $|\sigma(K)| = 2g(K)$. Then taking any minimal genus Seifert surface $S$ with symmetrized Seifert matrix $M \in M_n(\Z)$, we see that $|\sigma(K)| = |\sigma(M)| \leq \dim(M) = 2g(K)$, and hence $|\sigma(M)| = n$ so $M$ is definite.
\end{proof}
The following proposition gives a strong restriction on the Alexander polynomial of definite knots.
\begin{proposition}
Let $K \subset S^3$ be a definite knot. Then $|\Delta_K(t)| = |\sigma(K)| = 2g(K)$, where $|\Delta_K(t)|$ is the width of the Alexander polynomial.
\end{proposition}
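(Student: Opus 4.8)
The plan is to read off the span of the Alexander polynomial directly from a definite Seifert matrix. By Lemma \ref{lem:def}, since $K$ is definite I may choose a minimal genus Seifert surface $S$, of genus $g = g(K)$, whose symmetrized Seifert matrix $M = V + V^T$ is definite, where $V \in M_{2g}(\Z)$ is the (unsymmetrized) Seifert matrix. I will use the normalization $\Delta_K(t) \doteq \det(V - tV^T)$, where $\doteq$ denotes equality up to multiplication by a unit $\pm t^k$; since units do not change the span, the width $|\Delta_K(t)|$ is exactly the difference between the highest and lowest degrees of $\det(V - tV^T)$.

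Next I would identify the extreme coefficients. The matrix $V - tV^T$ has size $2g$, so $\det(V - tV^T)$ has degree at most $2g$, giving the standard bound $|\Delta_K(t)| \le 2g$. Its constant term is $\det(V - tV^T)|_{t=0} = \det V$, and its coefficient of $t^{2g}$ is $\det(-V^T) = (-1)^{2g}\det V = \det V$. Hence both the top and bottom coefficients equal $\det V$, and the width is exactly $2g$ precisely when $\det V \neq 0$.

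The crux is therefore to show that definiteness of $M$ forces $V$ to be nonsingular, and this is the step I would highlight. Working over $\R$, for any nonzero $x$ the scalar $x^T V^T x = (x^T V x)^T = x^T V x$, so $x^T M x = x^T V x + x^T V^T x = 2\,x^T V x$. Since $M$ is definite, $x^T M x \neq 0$ for all $x \neq 0$, whence $x^T V x \neq 0$ and in particular $V x \neq 0$. Thus $V$ has trivial kernel over $\R$ and $\det V \neq 0$.

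Combining these, both extreme coefficients of $\det(V - tV^T)$ are nonzero, so $|\Delta_K(t)| = 2g(K)$, and the equality $|\sigma(K)| = 2g(K)$ is immediate from Lemma \ref{lem:def}. The only points requiring care are bookkeeping ones: fixing the normalization of $\Delta_K$ so that the width is well defined, and using a minimal genus surface so that the matrix size is genuinely $2g(K)$. I do not expect a serious obstacle beyond the nonsingularity argument, which is the one genuinely using definiteness.
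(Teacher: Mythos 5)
Your proposal is correct and follows essentially the same route as the paper: read off the extreme coefficients of $\det(V - tV^T)$, observe both equal $\det V$, and use definiteness to conclude $\det V \neq 0$, with $|\sigma(K)| = 2g(K)$ coming from Lemma \ref{lem:def}. In fact you are slightly more careful than the paper, which conflates the Seifert matrix with its symmetrization when asserting $\det(M) \neq 0$; your explicit argument that $x^T M x = 2\,x^T V x$ forces $V$ to be nonsingular is exactly the detail needed to make that step airtight.
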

\begin{proof}
Let $S$ be a definite Seifert surface for $K$ with Seifert matrix $M \in M_n(\Z)$, and recall that $\Delta_K(t) = \det(M^T - tM)$. Multiplying both sides by $\det(M^{-1})$ makes it clear that the first and last terms of $\Delta_K(t)$ will be $\det(M)t^n$ and $\det(M)$ respectively. Since $M$ is definite, $\det(M) \neq 0$, and so the width of the Alexander polynomial is $n = |\sigma(M)| = |\sigma(K)|$. The second inequality is proved in Lemma \ref{lem:def}.
\end{proof}

\section{Periodic definite knots}
\begin{thm:main}
The quotient knot of a periodic definite knot is definite.
\end{thm:main}
The proof of this theorem relies on the following theorem of Edmonds.
\begin{theorem}\cite[Theorem 4]{E}
\label{areathm}
Let $\widetilde{K}$ be a periodic knot. Then there exists a minimal genus Seifert surface $\widetilde{S}$ for $\widetilde{K}$ which is preserved by the periodic action. Furthermore, the image of $\widetilde{S}$ in the quotient is a Seifert surface for the quotient knot $K$.
\end{theorem}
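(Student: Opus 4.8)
The plan is to build $\widetilde{S}$ by an \emph{equivariant geometric minimization} in the knot exterior, rather than by averaging a chosen combinatorial surface. Fix a generator $\tau$ of the $\Z/p$-action, whose fixed set is an unknotted axis $A$ disjoint from $\widetilde{K}$, and let $X = S^3 \setminus N(\widetilde{K})$ be the exterior, on which $\tau$ acts preserving $\partial X$ and fixing $A \subset X$ pointwise. A minimal genus Seifert surface is the same data as a properly embedded, two-sided surface representing the generator $c \in H_2(X,\partial X;\Z)$ and realizing the Thurston norm $\|c\| = 2g(\widetilde{K})-1$. The naive strategy --- take a minimal genus Seifert surface $\widetilde{S}_0$ and somehow average the orbit $\{\tau^i \widetilde{S}_0\}$ --- fails at the outset, because minimal genus Seifert surfaces need not be unique up to isotopy, so $\tau$ may carry $\widetilde{S}_0$ to a surface in a different isotopy class. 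The remedy is to minimize \emph{area} within the single homology class $c$, which $\tau$ does preserve, having first averaged an arbitrary metric over the group to obtain a $\tau$-invariant Riemannian metric on $S^3$, so that every $\tau^i$ acts by isometries.

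With this setup the argument proceeds in three steps. First, since the orientation-preserving action fixes $\widetilde{K}$ setwise, $\tau_*$ fixes the class $c$. Second, applying the existence and regularity theory for area-minimizing surfaces in their equivariant form (Meeks--Simon--Yau and the equivariant minimal surface theory of Meeks--Yau, whose central output is precisely a group-invariant least-area representative when a finite group acts by isometries), I obtain a $\tau$-invariant, embedded, least-area surface $\widetilde{S}$ representing $c$; the geometric input that makes the invariant extraction possible is that least-area surfaces in a common class are pairwise disjoint or equal, so the isometry $\tau$ permutes a disjoint orbit that can be organized into an invariant representative. Third, I verify that $\widetilde{S}$ is topologically a genuine minimal genus Seifert surface: using irreducibility of $X$ I discard any closed spherical components, check that $\partial\widetilde{S}$ is a single longitude of $\widetilde{K}$, and note that an incompressible least-area representative of $c$ realizes the norm, hence has genus exactly $g(\widetilde{K})$.

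I expect the main obstacle to be precisely this third step --- reconciling the \emph{geometric} minimizer with the \emph{topological} requirements --- together with the boundary bookkeeping on $\partial X$: one must control the boundary of the minimizer so that it is the correct longitude (arranging an invariant framing first), rule out extraneous closed components, and confirm that incompressibility upgrades least-area to norm-minimizing and hence to minimal genus. Once $\widetilde{S}$ is invariant, embedded, and minimal genus, the ``furthermore'' is comparatively routine. After a small invariant perturbation $\widetilde{S}$ meets the fixed axis $A$ transversely in a finite ($\tau$-invariant) set of points, and the quotient map $q \colon S^3 \to S^3/(\Z/p) \cong S^3$ restricts on $\widetilde{S}$ to a branched covering onto $\widetilde{S}/\tau$; this image is an embedded orientable surface with boundary $q(\widetilde{K}) = K$, that is, a Seifert surface for the quotient knot $K$. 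A purely $3$-manifold-topological alternative to the minimization --- replacing least-area surfaces by sutured manifold theory and using the Gabai double-curve sum to simplify the orbit $\{\tau^i \widetilde{S}_0\}$ equivariantly --- should also succeed, trading the geometric analysis for combinatorial surgery bookkeeping.
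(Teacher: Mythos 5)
The paper contains no internal proof of this statement: it is imported wholesale from Edmonds \cite{E}, whose paper is literally titled ``Least area Seifert surfaces and periodic knots.'' Measured against that source, your strategy is essentially his: average the metric to make the $\Z/p$-action isometric, replace the non-equivariant problem of choosing a surface by the equivariant problem of minimizing \emph{area}, and use the structure theory of least-area surfaces (Meeks--Simon--Yau, Meeks--Yau exchange--roundoff) to force invariance. Your opening diagnosis --- that naive averaging fails because $\tau$ need not preserve the isotopy class of a minimal genus Seifert surface --- is exactly the right motivation for passing to area.

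However, the step where you actually extract invariance does not work as written, and this is the heart of the argument. You invoke ``least-area surfaces in a common class are pairwise disjoint or equal'' and propose to organize the ``disjoint orbit'' $\{\tau^i\widetilde{S}\}$ into an invariant representative. No such organizing operation exists: the union of $p$ pairwise disjoint translates represents $p$ times the generator of $H_2(X,\partial X)$, not the class $c$, and no cut-and-paste of \emph{disjoint} surfaces returns a single embedded surface in class $c$. Moreover the disjoint-or-equal dichotomy is not even available in this setting, because the translates all share their boundary. The correct mechanism is equality, not disjointness: first fix a $\tau$-invariant longitude on the invariant boundary torus, then minimize area over the class of \emph{all} minimal genus Seifert surfaces with that fixed boundary --- a class that $\tau$ visibly preserves, unlike an isotopy class. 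Now $\tau(\widetilde{S})$ is a least-area competitor with the same boundary and the same area as $\widetilde{S}$; if the two were distinct, the Meeks--Yau exchange-and-roundoff argument applied to their intersection would yield either a competitor of strictly smaller area or a least-area surface with corners, contradicting regularity. Hence $\tau(\widetilde{S})=\widetilde{S}$ on the nose, with no ``organization'' needed. This choice of competition class also dissolves the third-step difficulty you anticipate: minimality of genus holds by definition of the class rather than by an upgrade from norm-minimization, incompressibility is automatic for minimal genus Seifert surfaces, closed components are excluded because deleting one strictly decreases area, and the boundary bookkeeping is trivial since the boundary was pinned to the invariant longitude from the start. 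Your treatment of the ``furthermore'' clause --- isolated transverse intersections with the axis and the branched quotient of an invariant embedded surface being an embedded Seifert surface for $K$ --- is fine as stated.
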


We will also need the following lemma.
\begin{lemma}
\label{lem:quot}
If the preimage of a Seifert surface $S$ under a $\Z/p$ rotation action in $S^3$ is a positive (resp. negative) definite Seifert surface $\widetilde{S}$, then $S$ is positive (resp. negative) definite. 
\end{lemma}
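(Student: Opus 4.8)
The plan is to relate the linking form on the downstairs surface $S$ to the linking form on the equivariant preimage $\widetilde{S}$ via the covering map $\pi \colon S^3 \to S^3/(\Z/p)$. The key observation is that $\pi$ restricts to a covering map $\widetilde{S} \to S$, and the induced map on first homology allows us to transfer definiteness. Concretely, I would first set up the transfer (or pushforward) homomorphism $\tau \colon H_1(S) \to H_1(\widetilde{S})$ that sends a class represented by a curve $\gamma \subset S$ to the sum of its lifts in $\widetilde{S}$. The heart of the argument is the claim that if $\alpha, \beta \in H_1(S)$ and $\tau(\alpha), \tau(\beta)$ are their transfers upstairs, then the linking numbers satisfy a clean multiplicative relation, something like $\mathrm{lk}(\tau(\alpha), \tau(\beta)^+) = p \cdot \mathrm{lk}(\alpha, \beta^+)$ or a comparable identity obtained by summing over the sheets of the cover.

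Once such an identity is established, definiteness transfers immediately. Suppose $\widetilde{S}$ is positive definite, so $\mathrm{lk}(\widetilde{x}, \widetilde{x}^+) > 0$ for every nonzero $\widetilde{x} \in H_1(\widetilde{S})$. Taking $\widetilde{x} = \tau(\alpha)$ for an arbitrary nonzero $\alpha \in H_1(S)$, the transfer map is injective (since $\pi_* \circ \tau$ is multiplication by $p$ on $H_1(S)$, which is torsion-free), so $\tau(\alpha) \neq 0$, and hence $\mathrm{lk}(\tau(\alpha), \tau(\alpha)^+) > 0$. Combined with the identity above, this forces $\mathrm{lk}(\alpha, \alpha^+) > 0$, so the symmetrized Seifert form on $S$ is positive definite. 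The negative definite case is identical after flipping signs.

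The main obstacle I expect is justifying the linking-number identity for the transfer map carefully. The subtlety is twofold: first, the positive pushoff $(\ )^+$ must behave equivariantly, so that a lift of the pushoff of $\gamma$ agrees with the pushoff of the lift of $\gamma$; this should hold because the $\Z/p$ action is by isometries (a rotation) and can be arranged to preserve the normal framing of $\widetilde{S}$. Second, computing $\mathrm{lk}(\tau(\alpha), \tau(\beta)^+)$ upstairs means summing linking numbers of all lifts of $\alpha$ against all lifts of $\beta^+$, and I must check that the cross-terms (linking between lifts sitting on different sheets) organize correctly. A cleaner route may be to work directly with Seifert matrices: choose a basis of $H_1(S)$, lift it to an equivariant basis of $H_1(\widetilde{S})$ adapted to the $\Z/p$ action, and observe that the symmetrized Seifert matrix of $\widetilde{S}$ contains a block (or a $p$-fold averaged copy) that is congruent to a positive multiple of the symmetrized Seifert matrix of $S$; definiteness of a matrix passes to any such restricted or congruent block. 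I would aim to phrase the argument so that it reduces to the elementary fact that a definite form restricts to a definite form on any subspace, applied to the image of the transfer map.
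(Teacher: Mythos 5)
Your proposal is correct and takes essentially the same approach as the paper: the paper's proof considers the full preimage $\widetilde{C} \subset \widetilde{S}$ of a homologically nontrivial curve $C \subset S$ (which is exactly your transfer $\tau([C])$), establishes $\mathrm{lk}(\widetilde{C},\widetilde{C}) = p \cdot \mathrm{lk}(C,C)$ by lifting intersection points of $C$ with a Seifert surface for its positive pushoff (resolving your ``cross-terms'' worry by counting all preimages at once), and concludes definiteness just as you do. If anything, your justification of the injectivity of $\tau$ via $\pi_* \circ \tau = p \cdot \mathrm{id}$ on the torsion-free group $H_1(S)$ is more explicit than the paper's unproved assertion that $\widetilde{C}$ is homologically nontrivial.
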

\begin{proof}
Consider a curve $C \subset S$ which is homologically non-trivial. Let $\widetilde{C}$ be the (possibly disconnected) preimage of $C$ in $\widetilde{S}$. Note that since $C$ is homologically non-trivial, so is $\widetilde{C}$. Now suppose $\widetilde{S}$ is positive definite so that lk$(\widetilde{C},\widetilde{C})>0$. We claim that lk$(C,C) >0$, so that $S$ is also positive definite. The linking number lk$(C,C)$ is the sum of (signed) intersection points between $C$ and the Seifert surface $\Sigma$ for a positive push-off of $C$. Let $\widetilde{\Sigma}$ be the preimage of $\Sigma$ which is an equivariant Seifert surface for a positive push-off of $\widetilde{C}$. Then each intersection point between $C$ and $\Sigma$ lifts to $p$ intersection points (with the same sign) between $\widetilde{C}$ and $\widetilde{\Sigma}$. Hence lk$(\widetilde{C},\widetilde{C}) = p \cdot$lk$(C,C)$, and so lk$(C,C) > 0$.
\end{proof}

\begin{proof}[Proof of Theorem \ref{thm:main}]
By Theorem \ref{areathm} any periodic knot $\widetilde{K}$ has an equivariant minimal genus Seifert surface $\widetilde{S}$ with quotient $S$. By Lemma \ref{lem:def}, $\widetilde{S}$ is definite, and so by Lemma \ref{lem:quot} $S$ is as well.
\end{proof}

\bibliography{bibliography}{}
\bibliographystyle{alpha}

\end{document}